 \newtheorem{theorem}{Theorem}[section]
 \newtheorem{corollary}[theorem]{Corollary}
 \newtheorem{proposition}[theorem]{Proposition}
\theoremstyle{definition}
\theoremstyle{remark}
\newtheorem{fact*}{Fact}
\newcommand\dd{\mathrm d}
\newcommand{\til}{\raise.17ex\hbox{$\scriptstyle\mathtt{\sim}$}}
\newcommand\beq{\begin{equation}}
\newcommand\eeq{\end{equation}}
\newcommand{\bbm}{\left[ \begin{smallmatrix}}
\newcommand{\ebm}{\end{smallmatrix} \right]}
\newcommand{\bpm}{\left( \begin{smallmatrix}}
\newcommand{\epm}{\end{smallmatrix} \right)}
\numberwithin{equation}{section}
\newlength{\Mheight}
\newlength{\cwidth}
\newcommand{\dfn}[1]{{\bf #1}\index{#1}}
\title{Germination phenomena}
\author{
J. E. Pascoe 
}
\date{\today}
\thanks{Partially supported by National Science Foundation DMS Analysis Grant 1953963}
\subjclass[2020]{32A10, 32C35, 12H25}
\begin{document}

\begin{abstract}
Many theorems in complex analysis propagate analyticity, such as the Forelli theorem, edge-of-the-wedge theorem and so on. We give a germination theorem which allows for general analytic propagation in complete normed fields. In turn, we develop general analogs of the Forelli theorem, edge-of-the-wedge theorem, and the royal road theorem, and gain insight into the geometry of the zero sets of hyperbolic polynomials.
\end{abstract}
\maketitle

%\tableofcontents

%\input{herglotz_ARCHIVAL}
%\input{introduction}
\section{Introduction}
%Edge-of-wedge
%forelli
%royal road
Recall the following theorem.
\begin{theorem}[F. Forelli \cite{forelli}]
Let $B^d$ denote the Euclidean unit ball in $\mathbb{C}^d$
Let $f:B^d \rightarrow \mathbb{C}$ such that $f$ is smooth at $0$ and
given any vector $x$ the function of one variable $f(zx)$ is analytic.
Then, $f$ is analytic on $B^d.$
\end{theorem}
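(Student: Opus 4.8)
The plan is to reconstruct $f$ near the origin from its Taylor jet there, use the slice hypothesis to force that jet to be holomorphic, and then show the resulting power series converges locally uniformly on $B^d$.

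First I would expand at the origin. Since $f$ is smooth at $0$ it has an asymptotic Taylor expansion there, and collecting monomials in $z_1,\dots,z_d,\overline{z_1},\dots,\overline{z_d}$ by bidegree gives $f\sim\sum_{p,q\ge0}F_{p,q}$ with $F_{p,q}$ homogeneous of degree $p$ in $z$ and $q$ in $\bar z$. For fixed $x\ne0$, substituting $z\mapsto zx$ shows the one-variable function $g_x(z):=f(zx)$ has Taylor jet $\sum_{p,q}z^p\bar z^{q}F_{p,q}(x)$ at $0$; but $g_x$ is holomorphic on $\{|z|<1/|x|\}$, so every term with $q\ge1$ must vanish, i.e. $F_{p,q}(x)=0$, and since this holds for all $x$ we get $F_{p,q}\equiv0$ for $q\ge1$. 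Writing $F_p:=F_{p,0}$, a holomorphic homogeneous polynomial of degree $p$, and using that the holomorphic function $g_x$ equals its Taylor series, we obtain the exact identity $f(zx)=\sum_{p\ge0}z^pF_p(x)$ for $|z|<1/|x|$. Two consequences: $\limsup_p|F_p(x)|^{1/p}\le|x|$ for every $x$; and, setting $z=1$ and $x=w$ for any $w\in B^d$ (legitimate since $|w|<1$), $f(w)=\sum_{p\ge0}F_p(w)$, with absolute convergence at every point of $B^d$. I would stress that no separate continuation step is needed here: this identity already holds throughout $B^d$, so it remains only to prove that $\sum_pF_p$ converges locally uniformly — that alone makes $f$, a local uniform limit of polynomials, holomorphic on $B^d$.

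The one nontrivial ingredient, and the step I expect to be the main obstacle, is upgrading this pointwise convergence to local uniform convergence. Because $\sum_pF_p(w)$ converges at each point of $B^d$, the family $\{F_p\}$ is pointwise bounded on $B^d$, so by the Baire category theorem $|F_p|\le k$ on some closed ball $\overline{B(x_0,\delta)}\subseteq B^d$, uniformly in $p$. Restricting each $F_p$ to the complex line through $x_0$ and a variable point $y$ and applying the one-variable Bernstein–Walsh inequality on the disc that line cuts from $B(x_0,\delta)$ turns this into a bound $\|F_p\|_{L^\infty(B(0,R))}\le k\,C(R)^p$ for every $R$; equivalently, the plurisubharmonic functions $\psi_p:=\tfrac1p\log|F_p|$ are locally uniformly bounded above on $\mathbb{C}^d$. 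Since in addition $\limsup_p\psi_p(w)\le\log|w|$ on $B^d$ by the first step, Hartogs's lemma for plurisubharmonic functions gives $\limsup_p\|F_p\|_{L^\infty(\overline{B^d_\rho})}^{1/p}\le\rho$ for each $\rho<1$, so $\sum_pF_p$ converges uniformly on $\overline{B^d_\rho}$ and we are done. The delicate point is precisely the passage from pointwise boundedness of $\{F_p\}$ to a genuine locally uniform upper bound on $\tfrac1p\log|F_p|$ — without which Hartogs's lemma cannot be invoked — and it is the combination of Baire category with a pluripotential (Bernstein–Walsh type) estimate that delivers it. I would expect the paper's germination theorem to be an abstract mechanism that packages exactly this kind of analytic propagation, so that in its framework the purely formal computation of the first step suffices.
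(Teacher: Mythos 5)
The paper never proves Forelli's theorem: it is cited to \cite{forelli} as motivating background, and what the paper actually develops is a replacement mechanism (the germination theorem via spread-set Lagrange interpolation, plus the perfect interpolation property for passing from a germ to the whole ball) designed to work over arbitrary complete normed fields. Your proof is correct and is the classical pluripotential route. The first half of your argument---decompose the Taylor jet into bihomogeneous pieces $F_{p,q}$, use holomorphy of each slice $z\mapsto f(zx)$ to force $F_{p,q}\equiv 0$ for $q\ge 1$, then read off $f(zx)=\sum_p z^p F_p(x)$ and hence pointwise absolute convergence of $\sum_p F_p$ on $B^d$ together with $\limsup_p|F_p(x)|^{1/p}\le|x|$---is exactly the reduction the paper's framework would also perform. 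Where the routes diverge is in upgrading pointwise to locally uniform convergence. You combine Baire category (to extract a ball on which $\{F_p\}$ is uniformly bounded), the one-variable Bernstein--Walsh inequality on complex lines (to get a locally uniform upper bound on $\tfrac1p\log|F_p|$), and Hartogs's lemma for plurisubharmonic functions (to convert the pointwise $\limsup$ bound into a uniform geometric decay rate on $\overline{B_\rho}$). The paper instead uses Lagrange interpolation at ``spread'' configurations of nodes to derive an inequality $\|h\|\le AB^{\deg h}\|h\|_X$ for polynomials, which lets it avoid subharmonicity entirely; that is the point, since the paper wants the argument to carry over to nonarchimedean and general complete normed fields where Hartogs's lemma and plurisubharmonicity are not available. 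The paper's germination theorem alone only yields convergence on a small neighborhood of the origin; getting the full unit ball in its framework requires the separate ``perfect interpolation property'' developed in Section~5. Your homogeneity-plus-Hartogs argument gets the whole ball in one stroke, but it buys that economy by being intrinsically complex-analytic. One small stylistic note: the Baire step you use (nested closed sets $E_k=\{w:\sup_p|F_p(w)|\le k\}$ covering $B^d$, so one has interior) is the precise analogue of the step in the paper's germination theorem where the nested relatively closed sets $A_{C,K}$ are observed to have a member with positive outer measure; so the two proofs share more architecture than the surface technology suggests.
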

Forelli's theorem has been generalized and extended in various ways \cite{ktk2013,ktk2016, ChoLocal,krantz2018}. Namely, there are independent generalizations \cite{pascoeblmswedge,pascoecmb, ChoLocal} which functions smooth at $0$ which are analytic in a relatively small set of directions having some kind of analytic continuation to a neighborhood of $0.$

Recall another theorem.
\begin{theorem}[The edge-of-the-wedge theorem, Bogoliubov]
Let $\Pi$ denote the upper half plane in $\mathbb{C}.$
Let $D \subseteq \mathbb{R}^d$ be open.
Let $f:\Pi^d\cup D \cup (-\Pi)^d \rightarrow \mathbb{C}$ be continuous such that $f$ is analytic
on $\Pi^d\cup (-\Pi)^d.$
Then, $f$ analytically continues to a neighborhood of $D$ in $\mathbb{C}^d.$
\end{theorem}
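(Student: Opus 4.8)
The plan is to treat the one-variable case separately, where the assertion is the classical fact that a function continuous across a line and holomorphic off it is holomorphic, and then to reduce the genuinely multidimensional content to that case by restricting $f$ to suitable complex lines and filling in a full complex neighbourhood of $D$ by a convexity argument.

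\emph{The case $d=1$.} Here $\Pi\cup D\cup(-\Pi)$ is open in $\C$, $f$ is continuous on it and holomorphic off $\R$. Fix $x_0\in D$ and a disc $\Delta$ about $x_0$ with $\overline{\Delta}$ in the domain. For any axis-parallel rectangle $R\subseteq\Delta$ I claim $\oint_{\partial R}f=0$: when $R$ avoids $\R$ this is Cauchy's theorem; when $\R$ crosses $R$, slice $R$ at the heights $\pm\eta$ into two rectangles lying off $\R$ (boundary integrals zero) together with a thin central strip, whose boundary integral tends to $0$ as $\eta\to 0^{+}$ because its two long edges converge to one real segment traversed in opposite orientations while its two short edges have length $2\eta$. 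Morera's theorem then gives that $f$ is holomorphic on $\Delta$, in particular near $x_0$.

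\emph{Reduction of the case $d\ge 2$.} Fix $x^{0}\in D$; translating, take $x^{0}=0$, and choose $\rho>0$ so small that the local portion of $\Pi^d\cup D\cup(-\Pi)^d$ near $0$ lies in the domain of $f$ and $f$ is bounded there. For $t\in\R^d$ small, $a\in\R^{d}$ with all $a_j>0$, and $w\in\C$ small, the complex line $w\mapsto t+aw$ satisfies $\IM(t_j+a_jw)=a_j\,\IM w$, so it runs inside $\Pi^d$ for $\IM w>0$, inside $(-\Pi)^d$ for $\IM w<0$, and inside $D$ for $w$ real. Thus $w\mapsto f(t+aw)$ satisfies the hypotheses of the one-variable case and extends holomorphically across $w=0$, with Cauchy estimates uniform in $t$ since $f$ is bounded. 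This yields a holomorphic continuation of $f$ to every point near $0$ whose imaginary part has all coordinates of a single sign, i.e. to a neighbourhood of $D$ inside the closed union of the two wedges.

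\emph{Passing to a full neighbourhood, and the main obstacle.} What remains, and is the heart of the matter, is to reach the points near $D$ whose imaginary parts have \emph{mixed} signs: these lie in neither wedge and on none of the lines above, so no separate-variables argument of Hartogs type reaches them. This ``synchronized-sign'' thinness of $\Pi^d\cup(-\Pi)^d$---equivalently, the disconnectedness of the two open orthants that serve as the bases of the two tubes---is precisely why the theorem has content. I would close the gap by a convexity input: either invoke Bochner's tube theorem after packaging the two wedges and the continued data across $D$ as a single holomorphic object on a connected tube, or, more geometrically, apply the continuity principle to a continuous family of analytic discs whose boundaries are pushed into the region where holomorphy is already known and whose interiors sweep out the mixed-sign neighbourhood; the swept region can be arranged simply connected, so monodromy gives a single-valued extension, which then agrees with $f$ on $\Pi^d\cup D\cup(-\Pi)^d$ by continuity. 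Producing such a disc family explicitly---it suffices to do so for $d=2$ and induct---and checking single-valuedness is where the real work lies.
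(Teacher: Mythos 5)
The statement you are proving is quoted in the paper's introduction as classical background (attributed to Bogoliubov, with the reader referred to Rudin \cite{rudeow}); the paper does not itself prove it, and its own Theorem~\ref{eow2} is an abstract analogue stated in the language of noble sheaves, not a re-derivation of the classical result. So there is no internal proof to compare against, and the review is of your argument on its own.

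Your $d=1$ Morera argument is correct and standard, and the observation that for $a\in\R^d$ with all $a_j>0$ the line $w\mapsto t+aw$ meets $\Pi^d$, $D$, $(-\Pi)^d$ according to the sign of $\IM w$ is also correct. But that slicing step does not extend $f$ anywhere: it only re-confirms holomorphy of the one-variable restrictions inside the two wedges, where $f$ was already holomorphic by hypothesis. The entire content of the theorem is the extension to points $z$ near $D$ with $\IM z_j$ of mixed signs, and your proposal does not carry out that step. You name two possible tools, but neither is available off the shelf here: Bochner's tube theorem requires the domain to be an honest tube $\R^d+i\,B$ with connected base, whereas $\Pi^d\cup D\cup(-\Pi)^d$ has base $\{\IM z_j>0\ \forall j\}\cup\{0\}\cup\{\IM z_j<0\ \forall j\}$, which is not open and whose interior is disconnected --- so one must first manufacture a connected tube-like domain of holomorphy across the edge, which is exactly the problem to be solved. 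Likewise the ``continuity principle with a sweeping family of analytic discs'' is the right idea, but producing the family (discs whose boundaries lie in the known region $\Pi^d\cup D\cup(-\Pi)^d$ while their interiors fill the mixed-sign wedge), proving the swept continuation is single-valued, and proving it agrees with $f$ by a boundary-value argument constitute the actual proof, and you explicitly defer all of it. As written, the argument stops exactly where the theorem begins.
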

For more information about the edge-of-the-wedge theorem and its history, see Rudin \cite{rudeow}. Occasionally, the geometry of $D$ can cause continuation within $\mathbb{R}^d$ itself, which is the content of the wedge-of-the-edge theorems, see \cite{pascoeblmswedge,pascoecmb}. In turn, the wedge-of-the-edge theorems were used to prove vast generalizations of automatic analyticity theorems (often operator theoretic) via the ``royal road" technique \cite{royal, regal} such as the L\"owner theorem on matrix monotone functions \cite{lo34, pastdfree, pascoeopsys, amyloew, pal1} and the Kraus theorem on matrix convex functions \cite{kraus36, pal2}. Essentially, the royal road theorem says that a class of locally integrable functions which is convex and closed in the topology of pointwise convergence whose members are analytic on positively oriented slices is a controlled way must be a class of analytic functions. We give a general analogue of the edge-of-the-wedge theorem in Theorem \ref{eow2} and the ``royal road" in Theorem \ref{nobleclass}.

Philosophically, one expects theorems in complex variables to really depend on the fact that one is dealing with complex numbers and their special geometric and algebraic properties, as opposed to some kind of general analysis or set theoretic ones. We prove a germination theorem which shows that a power series over a complete normed field (namely including the $p$-adics and completions of function fields, and relevant to $p$-adic analysis \cite{robert2013course} and differential equations \cite{padicdiffeq}) which is directionally convergent does indeed define an analytic function on an open set. In turn, we obtain some automatic analyticity results for certain sheaves of functions. Additionally, we will see a Forelli theorem for polydisks over an algebraically closed normed field, and an edge-of the-wedge theorem for general normed fields. Hence, one may be led to the conclusion that the aforementioned theorems lack a true ``complex aesthetic."

\section{Some preliminaries}
\subsection{Normed fields}
	Let $\mathbb{F}$ be a field. A \dfn{norm} on $\mathbb{F}$ is a function
	$|\cdot|:\mathbb{F}\rightarrow [0,\infty)$ such that
	\begin{enumerate}
		\item $|x|=0$ if and only if $x=0,$
		\item $|xy|=|x||y|,$
		\item $|x+y|\leq |x|+|y|.$
	\end{enumerate}
	We say a normed field $\mathbb{F}$ is \dfn{complete} if it is complete with respect to the
	metric $d(x,y) = |x-y|.$ We say a normed field is \dfn{nonarchimedian} if all triangles are isosceles, otherwise we call it archimedian.
	
	Ostrowski's theorem classifies the norms on $\mathbb{Q}$ \cite{robert2013course}.
	We call a normed field \dfn{local} if it is locally compact. Local fields, being themselves
	locally compact groups,
	possess a unique translation invariant Borel measure
	such that the unit ball has measure $1,$ the so-called \dfn{Haar measure.}
	
\subsection{The Cantor space}
	Define a \dfn{bitstring} to be a finite word composed from the alphabet $\{0,1\}.$
	Define the \dfn{bitstring binary tree}
	on bitstrings such that a bitstring $\alpha$ is the parent of $\alpha 0$
	and  $\alpha 1.$ The \dfn{Cantor space} $2^{\aleph_0}$ is the set of branches of the of the bitsting binary tree equipped with the metric that the distance from $a$ to $b$ is equal to $2^{-n}$
where $n$ is depth to which the branches $a$ and $b$ agree. There is the natural measure (in fact a Haar measure) on
$2^{\aleph_0}$ given by saying the set of branches extending a particular bitstring $\alpha$ of length $n$ has measure $2^{-n}.$

\section{The spread sets and interpolation}
Let $\mathbb{F}$ be a complete normed field.
For a polynomial $h$ define $\|h\|$ to be the sum of the norms of its coefficients, and define
$\|h\|_X =\sup_{X} |h|.$
Let $\mathbb{B}$ be the set of elements of $\mathbb{F}$ with norm less than $1.$
Say an embedding $\iota: 2^{\aleph_0}\rightarrow \mathbb{F}$ is \dfn{spread}
if
	$d(\iota(x),\iota(y))\geq Cd(x,y)^\gamma$
for some $C, \gamma >0.$
Say $X\subseteq \mathbb{F}^d$ is \dfn{affine-spread} if there are $\iota_1,\ldots,\iota_d$ spread embeddings such that $\iota=\iota_1 \times \ldots\times \iota_d$ satisfies $\iota^{-1}(X)$ has positive outer measure with respect to the natural measure on  $(2^{\aleph_0})^d.$
We say $X \subseteq \mathbb{F}^d$ is \dfn{spread} if the set
$$\tilde{X}=\{(x_2/x_1,\ldots, x_d/x_1)|(x_1,\ldots, x_d)\in X, x_1\neq 0\}$$
is affine-spread.

We note that for any local field $\mathbb{F},$ any set $X$ with positive measure is affine-spread.
\begin{proposition}
Let $\mathbb{F}$ be a local field. Any positive measure subset $X$ of $\mathbb{F}^d$ is affine-spread.
\end{proposition}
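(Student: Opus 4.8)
The proposition reduces to a pointwise statement: we must exhibit $d$ spread embeddings $\iota_j:2^{\aleph_0}\to\mathbb F$ whose product $\iota=\iota_1\times\cdots\times\iota_d$ pulls $X$ back to a set of positive outer measure. Since positivity of outer measure and the Hölder-type distortion estimate $d(\iota(x),\iota(y))\ge C\,d(x,y)^\gamma$ are coordinatewise conditions, it suffices to treat the case $d=1$: construct, for a positive-measure $X\subseteq\mathbb F$, a single spread embedding $\iota:2^{\aleph_0}\to\mathbb F$ with $\iota^{-1}(X)$ of positive outer measure, and then take products. So from now on I work with $d=1$.

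\textbf{Constructing the embedding.} The key point is that a local field $\mathbb F$, with its Haar measure normalized so the unit ball $\mathbb B$ has measure $1$, is (as a measure space, after intersecting with a ball) a ``Cantor-like'' space, and I want to build $\iota$ as a measure-scaling parametrization of a ball by $2^{\aleph_0}$. In the nonarchimedean case this is cleanest: the closed unit ball of a local field is a profinite group whose subgroup lattice gives a nested sequence of balls $\mathbb B=B_0\supseteq B_1\supseteq\cdots$ with $[B_n:B_{n+1}]=q$ a fixed prime power, and $\bigcap B_n=\{0\}$. Refining each level of the bitstring tree by $\lceil\log_2 q\rceil$ steps (or, more carefully, grouping bitstrings to match the index $q$), one gets a bijection between branches and cosets-in-the-limit, i.e. a homeomorphism $\iota:2^{\aleph_0}\to\mathbb B$. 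Because at depth $\sim n\log_2 q$ the branch determines the coset of $B_n$, which is a ball of radius $\asymp \theta^n$ for the uniformizer's absolute value $\theta<1$, one checks $d(\iota(x),\iota(y))\asymp \theta^{n}$ when $x,y$ agree to depth $n\log_2 q$, i.e. $d(\iota(x),\iota(y))\ge C\,d(x,y)^\gamma$ with $\gamma=\log_2(1/\theta)/\log_2 q$; simultaneously $\iota$ pushes the natural measure on $2^{\aleph_0}$ to (a constant multiple of) Haar measure on $\mathbb B$, since cylinders of mass $2^{-k}$ map onto balls of the correct mass. In the archimedean case $\mathbb F$ is $\mathbb R$ or $\mathbb C$, and one uses instead a standard bi-Hölder (indeed, for $\mathbb R$, essentially measure-preserving up to the base-$2$ expansion) parametrization of $[0,1]$ or $[0,1]^2$ by $2^{\aleph_0}$; this is classical.

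\textbf{Transferring positivity of measure.} Given such a $\iota$ with $\iota_*(\text{natural measure})= c\cdot(\text{Haar}\restriction\mathbb B)$ for a constant $c>0$: if $X\subseteq\mathbb F$ has positive Haar measure, then $X\cap(a+\mathbb B)$ has positive measure for some $a$ (cover $\mathbb F$ by countably many translates of $\mathbb B$), and translating we may assume $X\subseteq\mathbb B$ with $\mu(X)>0$. Then, because $\iota$ is a measurable bijection carrying null sets to null sets in both directions (it is bi-Hölder, hence maps the measure-zero ``boundary'' set where the base expansion is ambiguous to a measure-zero set, and is measure-scaling off that set), the pullback $\iota^{-1}(X)$ has outer measure $\ge \mu(X)/c>0$. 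Replacing $\iota$ by $\iota$ post-composed with the translation $x\mapsto x+a$ — which does not affect the spread inequality — yields the required spread embedding for the original $X$. Taking the $d$-fold product $\iota_1\times\cdots\times\iota_d$ of such embeddings (one per coordinate) and using that the product of the natural measures on $2^{\aleph_0}$ is the natural measure on $(2^{\aleph_0})^d$, while the product of Haar measures on $\mathbb B$ is Haar on $\mathbb B^d$, gives $\iota^{-1}(X)$ of positive outer measure for any positive-measure $X\subseteq\mathbb F^d$, completing the proof.

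\textbf{Main obstacle.} The genuine work is the bookkeeping in the nonarchimedean construction: matching the binary branching of $2^{\aleph_0}$ to the $q$-ary quotient structure of the profinite ball so that the induced map is simultaneously a homeomorphism, bi-Hölder with an explicit exponent $\gamma$, and measure-scaling by a fixed constant — in particular controlling the measure-zero set where base-$q$ and base-$2$ expansions disagree so that outer measure (not just measure) transfers. The archimedean case and the passage to $d>1$ are then routine, as is the reduction from a general positive-measure $X$ to one inside a ball.
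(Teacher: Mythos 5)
The archimedean case of your argument cannot work as stated, and this is a genuine obstruction rather than a bookkeeping issue. You want a spread embedding $\iota:2^{\aleph_0}\to\mathbb B$ whose pushforward of the natural measure on $2^{\aleph_0}$ is a constant multiple of Haar measure on $\mathbb B$. Suppose such an $\iota$ existed for $\mathbb F=\mathbb R$ or $\mathbb C$. Split $2^{\aleph_0}$ at level one into the two half-cylinders $L_0,L_1$, each of Cantor measure $1/2$, and set $S_i=\iota(L_i)$. The spread inequality forces $d(S_0,S_1)\ge C>0$, hence $d(\overline{S_0},\overline{S_1})\ge C>0$, so $\overline{S_0}$ and $\overline{S_1}$ are disjoint closed sets. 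On the other hand, the measure-scaling hypothesis gives $\mu(S_0)=\mu(S_1)=\mu(\mathbb B)/2$, so $\overline{\mathbb B}\setminus(\overline{S_0}\cup\overline{S_1})$ is an open set of Haar measure zero, hence empty. Thus $\overline{\mathbb B}$ is a disjoint union of two nonempty closed sets, contradicting its connectedness. So no measure-preserving (or even measure-scaling) spread embedding onto an archimedean ball exists; the ``classical base-$2$ parametrization'' you invoke is precisely the map that fails the spread condition (distinct branches $0111\ldots$ and $1000\ldots$ collapse to the same real number $1/2$, and nearby sequences on either side of that boundary map to arbitrarily close reals while being at $d$-distance $1$).

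The fix, and the paper's actual argument, is to give up on surjectivity entirely. Fix a single $p$ with $|p|<1/2$ and use the \emph{thin} (Lebesgue-null in the archimedean case) Cantor set $S=\{\sum_{i\ge 1}a_ip^i:a_i\in\{0,1\}\}$, whose natural embedding is spread precisely because $|p|<1/2$ makes the tail $\sum_{i>n+1}|p|^i$ strictly smaller than the leading term $|p|^{n+1}$. One then pushes the Cantor measure onto $S^d$ to get a (singular) measure $\nu$, forms $a(t)=\int 1_X(x-t)\,d\nu(x)$, and applies Fubini and translation-invariance of Haar measure: $\int a(t)\,dt=|X|>0$, so some translate $S^d+(-t)$ meets $X$ in a set of positive $\nu$-measure, and translation does not disturb the spread inequality. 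This averaging-over-translates step is the idea your write-up is missing; without it you cannot hit an arbitrary positive-measure $X$ with a fixed spread set. Your nonarchimedean construction (matching binary branching to the $q$-ary profinite structure of the ring of integers) could probably be pushed through with care when $q$ is a power of $2$, but it runs into the same surjectivity/measure-matching difficulty for general $q$, and in any case the Fubini argument handles both cases uniformly and with less bookkeeping.
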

\begin{proof}
Let $p\in \mathbb{F}$ such that $|p|<1/2.$
Consider the set $$S = \{\sum^{\infty}_{i=1} a_ip^i | a_i \in \{0,1\}\}.$$
Note that $S^d$ is affine-spread.
Induce a measure $\nu$ on $\mathbb{F}^d$ by taking the natural measure on $S^d$ by viewing 
$S$ as a Cantor space.
Consider   $a(t)=\int 1_X(x-t) \dd\nu(x).$
Now $$\int a(t) dt = \int \int 1_X(x-t) \dd\nu(x) \dd t = \int \int 1_X(x-t) \dd t \dd\nu(x) = |X|.$$
Thus, there must be some t such that $a(t)\neq 0$ and so $\nu(S+t \cap X)>0$.
\end{proof}

Let $X\subseteq \mathbb{F}^d.$ We say $X$ has \dfn{exponentiallly conditioned interpolation}
if there are $A, B>0$ such that
for any polynomial $h$
$$\|h\| \leq AB^{\deg h}\|h\|_X.$$

\begin{proposition}[Spread interpolation theorem]\label{interbound}
	Let $X$ be affine-spread. Then, $X$ has exponentially conditioned interpolation.
	
\end{proposition}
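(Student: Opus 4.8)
The plan is to reduce the multivariate claim to the one-dimensional case and then, in one variable, to exploit the fact that a spread embedding of $2^{\aleph_0}$ carries a rich enough interpolation structure that the classical Lagrange interpolation estimate gives exponential conditioning. First I would observe that it suffices to treat $d=1$: if each $X_k \subseteq \mathbb{F}$ is the image under a spread embedding of a full-measure-in-outer-sense set, then a polynomial $h$ in $d$ variables can be bounded one variable at a time, freezing the remaining coordinates at points where the slice still meets a positive-measure set (which happens for a positive-measure set of such freezings by a Fubini argument on $(2^{\aleph_0})^d$). Each one-variable pass costs a factor $A B^{\deg h}$, and $d$ passes cost $A^d B^{d\deg h}$, which is still of the required form after renaming constants.

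For the one-variable core, fix a spread embedding $\iota\colon 2^{\aleph_0}\to\mathbb{F}$ with $d(\iota(x),\iota(y))\ge C d(x,y)^\gamma$, and suppose $E=\iota^{-1}(X)$ has positive outer measure. Given a polynomial $h$ of degree $n$, I want to choose $n+1$ points $t_0,\dots,t_n\in X$ that are ``well separated'' in the sense that all the differences $|t_i-t_j|$ are bounded below by something like $(c/n)^{\gamma'}$, uniformly. To get these, pick $n+1$ bitstrings $\beta_0,\dots,\beta_n$ of common length $m$ with $m\approx\log_2(n+1)$ so that the cylinder sets they span are pairwise disjoint; each such cylinder has measure $2^{-m}\gtrsim 1/(n+1)$, so each one meets $E$ (a positive outer measure set can fail to meet only a null set's worth of cylinders, but more carefully one argues that positive outer measure forces intersection with a positive fraction of cylinders at every level, and in particular with enough of them — here one should be slightly careful and may need to pass to a level $m$ with $2^m$ a bit larger than $n+1$, say $2^m\approx K(n+1)$, so that at least $n+1$ of the $2^m$ cylinders meet $E$). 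Selecting one point $x_i\in E$ in each chosen cylinder, the branches agree to depth at most $m$, so $d(x_i,x_j)\ge 2^{-m}\gtrsim 1/(n+1)$, hence $|t_i-t_j|=d(\iota(x_i),\iota(x_j))\ge C(n+1)^{-\gamma}$ up to constants, and of course $t_i=\iota(x_i)\in X$.

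With such a separated node set in hand, I would write the Lagrange interpolation formula
\[
h(z)=\sum_{i=0}^{n} h(t_i)\prod_{j\ne i}\frac{z-t_j}{t_i-t_j},
\]
valid over any field since it is a polynomial identity, expand the products into monomials in $z$, and estimate: each coefficient of $h$ is a sum of at most $2^n$ terms (from expanding the $n$-fold product), each term a product of $|h(t_i)|\le\|h\|_X$ with a ratio of products of the $|t_j|$'s (bounded, say, by a fixed $R$ if we first note $X$ may be taken bounded, or otherwise track the diameter) over $\prod_{j\ne i}|t_i-t_j|\ge (C(n+1)^{-\gamma})^{n}$. Summing over the $n+1$ values of $i$ gives $\|h\|\le (n+1)2^n R^n C^{-n}(n+1)^{\gamma n}\|h\|_X$, and since $(n+1)^{\gamma n+1}\le D_1 D_2^{n}\cdot$ -- wait, $(n+1)^{n}$ is not of the form $AB^n$; this is the main obstacle, and it is resolved by a standard trick: choose the points not all at one dyadic level but spread across levels, or rather note that one does not need the $t_i$ to be optimally separated, only that the Vandermonde-type denominator is at least $A'^{-1}B'^{-n}$ for constants independent of $n$. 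This is achieved by taking the $t_i$ inside a \emph{fixed} cylinder set of some definite positive measure meeting $E$ densely and choosing them at a \emph{geometrically growing} sequence of sub-levels, so that the products $\prod_{j\ne i}|t_i-t_j|$ telescope to a fixed geometric rate; in the nonarchimedean case the ultrametric inequality makes $\prod_{j\ne i}|t_i-t_j|$ literally a product of $|t_i-t_{j(i)}|$ over nearest neighbors, giving clean geometric control, and in the archimedean case (which by Ostrowski-type reasoning is essentially $\mathbb{R}$ or $\mathbb{C}$) one invokes the classical fact that positive-measure sets admit interpolation nodes with polynomially-bounded Lebesgue constants, upgraded to the spread setting via the bi-Hölder comparison $d(\iota(x),\iota(y))\asymp d(x,y)^\gamma$. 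Thus the genuinely delicate point is arranging the node geometry so the interpolation denominators decay only geometrically, not super-exponentially, in $\deg h$; once that is in place the bound $\|h\|\le AB^{\deg h}\|h\|_X$ drops out of Lagrange interpolation and the triangle inequality.
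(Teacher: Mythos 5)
Your plan (reduce to one variable by slicing, select $n+1$ interpolation nodes in distinct cylinders of the Cantor space, apply Lagrange interpolation, and bound the coefficients) is the same skeleton as the paper's. You also correctly identify the crux: if you take all $n+1$ nodes at depth $m\approx\log_2 n$ and use only the crude separation $|t_i-t_j|\gtrsim n^{-\gamma}$, the denominator $\prod_{j\ne i}|t_i-t_j|$ is only $\gtrsim n^{-\gamma n}$, which is super-exponentially small, not of the form $A^{-1}B^{-n}$. But you do not actually resolve this, and the fixes you gesture at do not close the gap.

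The paper's resolution, which you miss, is not to change the choice of nodes at all (it still takes $\rho_0,\dots,\rho_n$ in distinct subtrees at level $m$ with $2^m\mu>n$), but to \emph{refine the estimate on the Vandermonde product}. For a fixed $i$, the quantities $-\log_2 d(\iota^{-1}(\rho_i),\iota^{-1}(\rho_j))$ cannot all be as large as $m$: at most one other branch can share a prefix of length $m-1$ with $\iota^{-1}(\rho_i)$, at most two more can share a prefix of length $m-2$, and generally at most $2^{k-1}$ branches share a prefix of length $m-k$. So the worst-case \emph{average} of $-\log_2 d$ over $j\ne i$ is not $m$ but
\[
\frac{(m-1)+(m-2)\cdot 2+\cdots+(m-\lfloor\log_2 n\rfloor)\cdot 2^{\lfloor\log_2 n\rfloor}}{2^{\lfloor\log_2 n\rfloor}},
\]
which the paper bounds by a constant depending only on the outer measure $\mu$ of $\iota^{-1}(X)$, not on $n$. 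Hence $\prod_{j\ne i}|\rho_i-\rho_j|\geq C^n 2^{-\gamma n\cdot O(1)}$, which is exponentially (not super-exponentially) small. This geometric-mean/tree-averaging step is the actual content of the proposition and is absent from your argument. Your suggested repairs are not substitutes: ``geometrically growing sub-levels'' is vague and does not match what is needed; the ultrametric inequality does not reduce $\prod_{j\ne i}|t_i-t_j|$ to nearest-neighbor factors (ultrametricity controls single triangles, not a product over all $j$); and ``polynomially-bounded Lebesgue constants'' for positive-measure sets in $\mathbb{R}$ or $\mathbb{C}$ is neither a standard off-the-shelf fact nor, as stated, even the right shape of bound (you need exponential control of $\|h\|$ in terms of $\|h\|_X$, and invoking Lebesgue constants would at best control $\|h\|_{[-1,1]}$, leaving the passage to the sum-of-coefficients norm unaddressed). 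Additionally, the archimedean/nonarchimedean split you introduce is unnecessary: the tree-averaging argument works uniformly for any normed field, which is part of the point of the paper's approach. So the proposal correctly diagnoses where the difficulty lies but leaves the central estimate unproved.
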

\begin{proof}
	The proof will go be induction on the dimension. If $d=0,$ the claim is trivial.
	Let $n= \deg h.$
	Let $\mu$ be the measure of  $\iota^{-1}(X).$
	Choose $m$ such that $2^m \mu >n.$ 
	Choose $\rho_0, \ldots, \rho_n$ such that each of the slices $\{\rho_i\}\times \mathbb{F}^{d-1} \cap X = \{\rho_i\} \times X_i$  satisfies $X_i$ is spread and $\iota_1^{-1}(\rho_i)$ are in distinct subtrees of the Cantor space at level $m.$
	By Lagrange interpolation,
		$$h=\sum h_i \prod_{j\neq i} \frac{z-\rho_j}{\rho_i-\rho_j}.$$
Now, $h_i$ by induction and the numerators by the fact that $X$ must be bounded are obviously exponentially bounded. So, it suffices to bound $\prod_{j\neq i}(\rho_i-\rho_j)$ below.
Now $\rho_i-\rho_j \geq Cd(\iota_1^{-1}(\rho_i),\iota_1^{-1}(\rho_j))^{\gamma}.$
Thus it is sufficient to bound $$-\frac{1}{n}\sum_{j\neq i} \log_2 d(\iota_1^{-1}(\rho_i),\iota_1^{-1}(\rho_j)).$$
Since these are in distinct branches of the Cantor space, note 
the worst case estimate for the average would be
	$$\frac{(m-1) + (m-2)2 + (m-3)2^2 + \ldots + (m-\lfloor{\log_2 n}\rfloor)2^{\lfloor{\log_2 n}\rfloor}}{2^{\lfloor{\log_2 n}\rfloor}}$$
which is bounded by $2^{2+(m-\lfloor\log_2 n\rfloor)}\leq 2^{3-\log_2 \mu}.$
\end{proof}

\section{The germination theorem}

We now prove our germination theorem.
\begin{theorem}[Germination theorem]
	Let $X$ be spread. Let $f$ be a formal power series. Suppose that for each $x\in X$ we have that $f(zx)$ has positive radius of convergence as a function of $z \in \mathbb{F}.$ Then, the formal power series $f$ has positive radius of convergence.
\end{theorem}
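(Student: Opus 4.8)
The plan is to prove this by reducing, via homogenization, to an \emph{affine} statement to which the spread interpolation theorem (Proposition \ref{interbound}) applies, and then to combine that interpolation estimate with a measure-exhaustion argument. Throughout I interpret ``$f$ has positive radius of convergence'' as $\limsup_n \|f_n\|^{1/n} < \infty$, where $f = \sum_{n \geq 0} f_n$ is the decomposition into homogeneous components; so $f(zx) = \sum_n f_n(x) z^n$, and the goal is an exponential bound $\|f_n\| \leq C D^n$.

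\textbf{Dehomogenization.} For $x = (x_1,\dots,x_d)$ with $x_1 \neq 0$, homogeneity gives $f_n(x) = x_1^n f_n(1, x_2/x_1, \dots, x_d/x_1)$. Setting $g_n(y) := f_n(1,y)$ for $y \in \mathbb{F}^{d-1}$ and $\tilde x := (x_2/x_1,\dots,x_d/x_1)$, we get $f(zx) = \sum_n g_n(\tilde x)\,(x_1 z)^n$, so the hypothesis says precisely that $\sum_n g_n(y) w^n$ has positive radius of convergence for every $y$ in the affine-spread set $\tilde X$. Moreover, since the map $\beta \mapsto (\beta_2,\dots,\beta_d)$ is a bijection between multi-indices of weight $n$ in $d$ variables and multi-indices of weight $\leq n$ in $d-1$ variables, the polynomials $g_n$ and $f_n$ have the same list of coefficients, hence $\|g_n\| = \|f_n\|$. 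Thus it suffices to prove the affine version: if $Y \subseteq \mathbb{F}^m$ is affine-spread and $g = \sum_n g_n$ is a formal power series with $\sum_n g_n(y) w^n$ of positive radius of convergence for every $y \in Y$, then $\limsup_n \|g_n\|^{1/n} < \infty$.

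\textbf{Exhaustion and interpolation.} For each $y \in Y$, positive radius of convergence of $\sum_n g_n(y) w^n$ means $|g_n(y)| \leq k^{n+1}$ for all $n$ once $k$ is a large enough integer, so $Y = \bigcup_k Y_k$ with $Y_k := \{\, y \in Y : |g_n(y)| \leq k^{n+1}\ \forall n \,\}$ increasing in $k$. Fix spread embeddings $\iota_1,\dots,\iota_m$ with $\iota = \iota_1 \times \cdots \times \iota_m$ and $\iota^{-1}(Y)$ of positive outer measure in $(2^{\aleph_0})^m$. The natural measure on the compact metric group $(2^{\aleph_0})^m$ is a Radon probability measure, so its outer measure is continuous from below along increasing sequences (take Borel hulls and intersect them downward); hence $\mu^*(\iota^{-1}(Y_k)) \uparrow \mu^*(\iota^{-1}(Y)) > 0$, and $\iota^{-1}(Y_K)$ has positive outer measure for some $K$, i.e.\ $Y_K$ is affine-spread. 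Proposition \ref{interbound} then yields $A,B>0$ with $\|h\| \leq A B^{\deg h}\|h\|_{Y_K}$ for every polynomial $h$; applying this to $h = g_n$ gives $\|g_n\| \leq A B^n \sup_{Y_K}|g_n| \leq A B^n K^{n+1} = AK(BK)^n$, so $\limsup_n \|g_n\|^{1/n} \leq BK < \infty$, as required.

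\textbf{Expected obstacle.} The only steps beyond bookkeeping are the verification that some single $Y_K$ is again affine-spread (continuity from below of the outer measure of the pullbacks $\iota^{-1}(Y_k)$) and the identity $\|f_n\| = \|g_n\|$ under dehomogenization; each is routine once isolated, and the theorem then follows by assembling the interpolation bound over this exhaustion of $X$.
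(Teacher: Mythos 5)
Your proof is correct and follows essentially the same route as the paper: decompose $f$ into homogeneous components, exhaust the (affine-)spread set by the nested subsets on which the coefficients obey a uniform exponential bound, find one of positive outer measure (hence affine-spread), and apply the spread interpolation theorem termwise. The one thing you do that the paper leaves implicit is the dehomogenization and the check that $\|g_n\|=\|f_n\|$, which is genuinely needed to invoke Proposition~\ref{interbound} since that result is stated for affine-spread sets while $X$ is only spread.
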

\begin{proof}
	Write $f(x) = \sum h_n(x)$ where $h_n$ is a homogeneous polynomial of degree $n.$
	Let $A_{C,K}$ denote the set of $x\in X$ such that the power series coefficients $h_n(x)$ are bounded by $CK^n.$
	Note since their union is everything, and they are nested and relatively closed, some $\iota^{-1}(A_{C,K})$ has positive outer measure and is thus spread. Applying the spread interpolation theorem to each of the $h_n$, we are done.
\end{proof}

The following is a quantitative variant of the germination theorem which gives more information about the radius of convergence when we know the radii of convergence \emph{a priori} in a spread set of directions.
\begin{theorem}[Quantitative germination theorem]
	Let $X$ be spread. Let $f$ be a formal power series. There is a $C>0$ such that if we for each $x\in X$ we have that $f(zx)$ has radius of convergence $1$ as a function of $z \in \mathbb{F},$ then the formal power series $f$ has radius of convergence at least $C$.
\end{theorem}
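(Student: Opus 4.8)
The plan is to revisit the proof of the (qualitative) germination theorem and track the constants quantitatively, using the explicit exponential bound furnished by the spread interpolation theorem (Proposition \ref{interbound}). First I would fix the spread embedding $\iota = \iota_1\times\cdots\times\iota_d$ witnessing that $\tilde X$ is affine-spread, and I would observe that the exponential interpolation constant $B$ produced by Proposition \ref{interbound} depends only on the geometric data of $\iota$ (the constants $C,\gamma$ and the dimension $d$), \emph{not} on the subset of $X$ one restricts to, as long as the preimage has positive outer measure. This is the crucial point: in the qualitative proof one passes to some $A_{C,K}$ with positive outer measure, but the value of $B$ is already determined before that passage. Therefore I would set $C := 1/B$ (or rather $1/B$ adjusted by the homogenization factor coming from passing between $X$ and $\tilde X$, i.e. from $x_1$-normalization), depending only on $d$ and the spread constants.

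Next I would carry out the estimate. Writing $f = \sum_n h_n$ with $h_n$ homogeneous of degree $n$, the hypothesis that $f(zx)$ has radius of convergence exactly $1$ for every $x\in X$ means $\limsup_n |h_n(x)|^{1/n} = 1$ for each $x\in X$; in particular, for every $\eps>0$ and every $x\in X$ the coefficients satisfy $|h_n(x)| \le C_{x,\eps}(1+\eps)^n$ for all $n$. Fix $\eps>0$. The sets $A_{C',K}$ with $K = 1+\eps$, $C'\in\mathbb N$, are relatively closed, nested, and cover $X$, so one of them, say $A_{C'_0,1+\eps}$, has $\iota^{-1}(A_{C'_0,1+\eps})$ of positive outer measure, hence $A_{C'_0,1+\eps}$ is spread. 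Applying Proposition \ref{interbound} to each homogeneous piece $h_n$ on the set $A_{C'_0,1+\eps}$ gives $\|h_n\| \le A_0 B^n \|h_n\|_{A_{C'_0,1+\eps}} \le A_0 B^n C'_0 (1+\eps)^n$, where $A_0, B$ depend only on the ambient spread data. Thus $\|h_n\|^{1/n} \le (A_0 C'_0)^{1/n} B (1+\eps)$, and letting $n\to\infty$ yields $\limsup_n \|h_n\|^{1/n} \le B(1+\eps)$. Since $\eps>0$ was arbitrary, $\limsup_n \|h_n\|^{1/n} \le B$, so $f$ has radius of convergence at least $1/B =: C$.

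The main obstacle, and the step that needs care, is the bookkeeping in passing from $X \subseteq \mathbb F^d$ to the dehomogenized set $\tilde X \subseteq \mathbb F^{d-1}$ on which ``affine-spread'' and hence the interpolation bound are actually available. One must check that restricting $h_n(x_1,\dots,x_d)$ to the slice $x_1 = 1$ and bounding the resulting polynomial in $d-1$ variables on $\tilde X$ controls $\|h_n\|$ up to a factor that is at worst exponential in $n$ with a base depending only on $d$ and on an a priori bound for the coordinates of points of $\tilde X$ (which is finite since affine-spread sets are bounded). This reintroduction of the $x_1$ variable and the homogeneity degree must be done so that the resulting constant is still independent of which $A_{C',K}$ one landed in; since homogeneity makes the reconstruction of $h_n$ from its restriction to $x_1=1$ completely explicit, this is a routine but essential computation. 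A secondary, purely cosmetic point is that one should phrase the conclusion with a single universal $C$: because the argument only ever used $\limsup_n|h_n(x)|^{1/n}\le 1$, rescaling shows that if instead the slice radii of convergence are all at least $r$ then the radius of convergence of $f$ is at least $Cr$, which is the natural homogeneous form of the statement.
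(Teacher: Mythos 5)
Your approach is genuinely different from the paper's: you argue directly, tracking the interpolation constant, whereas the paper argues by contradiction, taking a hypothetical sequence $f_n$ with radii of convergence going to $0$, building a single spread set $\tilde X = \bigcap_{n\ge N} A_{K_n,n}$ by a diagonal measure-theoretic extraction, and then applying the interpolation theorem on that one set to all the $f_n$ at once. Your route is cleaner and, if it worked as written, would produce an explicit value of $C$; the paper's route is purely existential.

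However, there is a real gap in the step you flagged as ``the crucial point.'' You assert that the exponential constant $B$ produced by Proposition \ref{interbound} ``depends only on the geometric data of $\iota$ (the constants $C,\gamma$ and the dimension $d$), not on the subset of $X$ one restricts to.'' That is false. Looking at the proof of Proposition \ref{interbound}: $\mu$ is taken to be the (outer) measure of $\iota^{-1}(X)$, $m$ is chosen so that $2^m\mu > n$, and the final bound is $2^{3-\log_2\mu} = 8/\mu$; so the exponential rate in the interpolation inequality grows like $2^{\mathrm{const}\cdot\gamma/\mu}$ and genuinely depends on $\mu$. When you restrict to $A_{C'_0,1+\eps}$, the relevant measure is $\mu^*(\iota^{-1}(A_{C'_0,1+\eps}))$, which a priori depends on both $\eps$ and $f$. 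Since the sets $A_{C',1+\eps}$ shrink as $\eps\to 0$ for fixed $C'$, you have no control on this measure as you send $\eps\to 0$ in the final limit, so the $B$ in your inequality $\limsup_n\|h_n\|^{1/n}\le B(1+\eps)$ is really a $B_\eps$ and the argument stalls.

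The fix is short but has to be made explicit. For fixed $\eps>0$, the relatively closed sets $A_{C',1+\eps}$ increase in $C'$ and their union contains $X$ (the radius-$1$ hypothesis gives $\limsup_n |h_n(x)|^{1/n}\le 1 < 1+\eps$ pointwise on $X$, hence a bound $|h_n(x)|\le C'(1+\eps)^n$ for some $C'$). The natural measure on $(2^{\aleph_0})^d$ is a Borel-regular outer measure, so it is continuous from below along increasing sequences; therefore $\mu^*\big(\iota^{-1}(A_{C',1+\eps})\big)\to\mu^*\big(\iota^{-1}(\bigcup_{C'}A_{C',1+\eps})\big)\ge\mu$ as $C'\to\infty$. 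Thus you may choose $C'_0 = C'_0(\eps,f)$ so that $\mu^*(\iota^{-1}(A_{C'_0,1+\eps}))\ge\mu/2$, and then the interpolation constant is bounded by a $B_0$ determined by $d$, the spread constants, and $\mu/2$ alone --- independent of $\eps$ and of $f$. With that adjustment your limit $\eps\to 0$ is legitimate and yields $C = 1/B_0$ (up to the homogenization factor you already noted for passing between $X$ and $\tilde X$). Once this is inserted, your direct argument is a valid and arguably preferable alternative to the paper's contradiction/diagonal argument.
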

\begin{proof}
	The proof will go by contradiction.
	Let $f_n$ be a sequence of functions satisfying our hypotheses such that their radii of convergence go to zero. Let $\varepsilon >0.$ Write the homogeneous expansion $f_n = \sum h_{m,n}.$ 
	Inductively define $A_{K,n}$ to be the set of $x\in \overline{X}$ such that $|h_{m,n}(x)| \leq K(1+\varepsilon)^m.$ Note for each $n$ the union of $A_{K,n}$ is all of $X$
	Note $X\setminus A_{K,n}$ is relatively open in $X$ and therefore the measure of $\iota^{-1}(X\setminus A_{K,n})$ goes to $0$ as $n$ goes to infinity. For each $n$, choose $K_n$ such that the measure of $\iota^{-1}(X\setminus A_{K_n,n})<1/n!.$
	So, there is an $N$ such that  the preimage of
	$\tilde{X}=\bigcap_{n\geq N} (A_{K_n,n})$ has positive outer measure and is thus spread. Applying the spread interpolation theorem to each $h_{m,n}$ gives a uniform radius of convergence, which is a contradiction.
\end{proof}

\subsection{The geometry of hyperbolic polynomials}
Let $\mathbb{F}$ be a normed field. Let $X\subseteq \mathbb{F}^d$ be spread.
We say a polynomial $p$ is \dfn{$X$-hyperbolic} if for every $u \in \mathbb{F}^d$ and $x \in X$ the polynomial
$p(u+zx)$ is $\mathbb{F}$-rooted.
We see the following corollary of the quantitative germination theorem, which says that if the zeros are far away along a spread set of slices, then the zeros are far away.
\begin{corollary}
	Let $\mathbb{F}$ be a complete normed field. Let $X$ be spread.
	Suppose $p$ is an $\mathbb{F}$-hyperbolic polynomial on $\mathbb{F}^d$.
	Let $Z$ be the zero set of $p.$
	There is a $C>0$ depending only on $X$ such that for every $u \in \mathbb{F}^d,$
	$$d(u, Z)\geq C\inf_{x\in X} \sup_r \{r | p(u+zx)\neq 0 \textrm{ for all } |z|\leq r  \}.$$
\end{corollary}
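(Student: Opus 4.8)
The plan is to expand the reciprocal of $p$ about $u$, recognize both sides of the inequality as a comparison of radii of convergence, and apply the quantitative germination theorem. Fix $u \in \mathbb{F}^d$. If $p(u) = 0$ then $d(u,Z) = 0$, and since $z = 0$ always lies in $\{|z|\le r\}$ and makes $p(u+zx) = 0$, the supremum defining the right-hand side is $0$ for every $x \in X$; so the inequality is trivial and we may assume $p(u) \neq 0$. Then $p(u+v)$, viewed as a polynomial in $v \in \mathbb{F}^d$, has invertible constant term $p(u)$, hence a formal inverse; write $f(v) := 1/p(u+v) = \sum_n h_n(v)$ with $h_n$ homogeneous of degree $n$.

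First I would record the two endpoint estimates. For a direction $x$, set $\rho(x) = \sup\{r : p(u+zx)\neq 0 \text{ for all } |z|\le r\}$, so that the right-hand side of the claimed bound is $C\inf_{x\in X}\rho(x) =: C r_0$. For $x \in X$, hyperbolicity of $p$ makes $q(z) := p(u+zx)$ an $\mathbb{F}$-rooted polynomial with $q(0) = p(u) \neq 0$; its roots $b_i$ lie in $\mathbb{F}^\times$, and $\rho(x) = \min_i |b_i|$ (interpreted as $+\infty$ if $q$ is constant). Decomposing $1/q$ into partial fractions over $\mathbb{F}$ — legitimate since $q$ splits over $\mathbb{F}$ — and using that the binomial coefficients in $(1-z/b_i)^{-m_i} = \sum_n \binom{n+m_i-1}{m_i-1}(z/b_i)^n$ grow at most polynomially in $n$ in any complete normed field, one sees that $f(zx) = 1/q(z)$ has radius of convergence at least $\rho(x) \ge r_0$. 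Conversely, if $\sum_n h_n$ converges on $\{|v| < R\}$ to a function $g$, then multiplying this convergent series term by term by the polynomial $p(u+\cdot)$ — whose Cauchy product with $f$ is the formal product $1$ — gives $g(v)p(u+v) = 1$ throughout $\{|v| < R\}$, so $p$ has no zero within distance $R$ of $u$; letting $R$ increase to the radius of convergence of $f$ yields $d(u,Z) \ge (\text{radius of convergence of } f)$.

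It remains to bound the radius of convergence of $f$ below by $Cr_0$, and this is the crux. If $r_0 = 0$ there is nothing to prove; otherwise set $g(v) := f(r_0 v)$, so that for each $x \in X$ the one-variable series $z \mapsto g(zx) = f(zr_0 x)$ has radius of convergence $\rho(x)/r_0 \ge 1$. Now apply the quantitative germination theorem to $g$ with the spread set $X$: although it is phrased for slices of radius of convergence exactly $1$, its proof uses only that these radii are bounded below by $1$ (that is all that is needed for the sets $A_{K,n}$ to exhaust $X$), so it produces a constant $C > 0$ depending only on $X$ with the radius of convergence of $g$ at least $C$. Unscaling, $f$ has radius of convergence at least $Cr_0$. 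Combining this with the previous paragraph gives $d(u,Z) \ge Cr_0 = C\inf_{x\in X}\sup\{r : p(u+zx)\neq 0 \text{ for all } |z|\le r\}$, which is the assertion.

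I expect the principal difficulty to be the first endpoint estimate, namely verifying cleanly that the Taylor expansion about $0$ of the reciprocal of an $\mathbb{F}$-rooted polynomial converges precisely up to the nearest root, with no constants depending on $\deg p$ or on $u$ contaminating the argument — in particular checking the polynomial growth of binomial coefficients in a general (possibly nonarchimedean) complete normed field. The remaining pieces — disposing of the case $p(u) = 0$, extracting $g(v)p(u+v) = 1$ from convergence of the series, and verifying that the germination constant depends on $X$ alone — should be routine once that estimate is secured.
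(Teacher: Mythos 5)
The paper states this corollary without an explicit proof, offering only that it follows from the quantitative germination theorem; your derivation is precisely the argument the paper is gesturing at. You expand $1/p(u+\cdot)$ as a formal power series, use the $\mathbb{F}$-rootedness (via partial fractions over $\mathbb{F}$ and the polynomial growth of integer binomial coefficients, which holds archimedean or not) to lower-bound each directional radius of convergence by the directional distance to $Z$, feed this into the quantitative germination theorem after rescaling by $r_0 = \inf_x \rho(x)$, and then convert the multivariate radius of convergence back into a bound on $d(u,Z)$ via the Cauchy product with the polynomial $p(u+\cdot)$. Your observation that the QGT's proof only uses ``radius of convergence at least $1$'' (so that the sets $A_{K,n}$ exhaust $X$) is correct, and the constant produced does depend on $X$ alone. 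The only unaddressed edge case is $r_0 = \infty$, but then $p(u+zx)$ is constant for all $x \in X$, which by the spread interpolation theorem forces $p$ itself to be constant, so $Z = \emptyset$ and the inequality holds vacuously; this is a trivial omission. Overall the proposal is correct and matches the paper's intended route.
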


\section{The perfect interpolation property and Forelli's theorem on polydisks}

We say $\mathbb{F}$ has the \dfn{perfect interpolation property} if given a nested sequence of closed sets $X_i$ such that $$\bigcup X_i = \mathbb{B}^d$$ there are $B_i \rightarrow 1$ and a companion sequence $A_i>0$ such that for any polynomial $h$ we have that
$$\|h\| \leq A_iB_i^{\deg h}\|h\|_{X_i}.$$

\begin{theorem}
	Let $\mathbb{F}$ be an algebraically closed complete normed field.
	Then, $\mathbb{F}$ has the perfect interpolation property.
\end{theorem}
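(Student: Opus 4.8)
\textit{Proof proposal.}

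The plan is to pull apart the two estimates hidden in the statement: controlling the coefficient norm $\|h\|$ by a sup norm over the polydisk, and controlling a polydisk sup norm by $\|h\|_{X_i}$ with base tending to $1$. The first part is soft. Writing $\overline{\mathbb B}^d=\{\,|x_j|\le 1\,\}$ and $\overline{\mathbb B}_\rho^d=\{\,|x_j|\le\rho\,\}$, in the archimedean case Cauchy estimates on $\overline{\mathbb B}^d$ give $|a_\alpha|\le\|h\|_{\overline{\mathbb B}^d}$ for every coefficient, and in the nonarchimedean case $\|h\|_{\overline{\mathbb B}^d}$ is the multiplicative Gauss norm $\max_\alpha|a_\alpha|$; either way $\|h\|\le\binom{\deg h+d}{d}\|h\|_{\overline{\mathbb B}^d}$. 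Expanding $h$ into its homogeneous parts and applying Cauchy's estimate on each part, one gets the Hadamard--three--circle--type bound $\|h\|_{\overline{\mathbb B}^d}\le(\deg h+1)\,\rho^{-\deg h}\|h\|_{\overline{\mathbb B}_\rho^d}$ for every $\rho<1$. So, given $\epsilon>0$ and fixing $\rho<1$ with $\rho^{-1}<1+\epsilon/2$, it suffices to produce, for some $i$, a constant $A$ with $\|h\|_{\overline{\mathbb B}_\rho^d}\le A(1+\eta)^{\deg h}\|h\|_{X_i}$ for a small enough $\eta$; this is a Bernstein--Walsh / Remez--type comparison.

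For this core comparison I would first dispatch the nonarchimedean case (Ostrowski's theorem splits the two worlds, and algebraic closedness forces the archimedean world to be $\mathbb C$). In the nonarchimedean case the polydisk sup norms $\|h\|_{\overline{\mathbb B}_{\rho'}^d}$ are explicit multiplicative Gauss norms, so the only issue is that $X_i$ eventually ``computes'' $\|h\|_{\overline{\mathbb B}_{\rho'}^d}$ for some $\rho'\in(\rho,1)$: for fixed $h$ the value $|h(x)|$ agrees with this Gauss norm off a set of measure zero on each polysphere of radius $\rho'$, so it is enough that $X_i$ meet a positive-measure portion of such polyspheres, which follows from $|X_i|\to|\mathbb B^d|$ together with Fubini. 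In the archimedean case algebraic closedness enters through factorization: restrict $h$ to a one-variable slice, write that restriction as a product of linear forms over $\mathbb C$, and bound $\|\cdot\|_{X_i}$ of a product of linear forms from below by integrating its $\log$ against a Haar-type measure carried by $X_i$ (this is the transfinite-diameter lower bound, and it only needs the one-variable structure along slices); the $d$-variable inequality then reduces to the one-variable one by slicing. The point is that $X_i\nearrow\mathbb B^d$ drives the resulting logarithmic-potential defect to $0$, so the base can be taken to be $1+\eta$ for $i$ large.

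These estimates are produced degree by degree, and the remaining task is to upgrade them to a single companion pair $(A_i,B_i)$ with $B_i\to1$ working for all degrees at once. On the finite-dimensional space $\{\deg h\le n\}$ the maps $h\mapsto\|h\|_{X_i}$ are lower semicontinuous and increase pointwise to $h\mapsto\|h\|_{\mathbb B^d}$, so (a Dini-type theorem for lower semicontinuous functions applies and) the convergence is uniform on the unit sphere, giving a good interpolation constant in degree $n$ once $i$ is large; a diagonal argument over $\epsilon\downarrow 0$ then extracts the sequences $A_i,B_i$.

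The hard part — and the reason the polydisk and algebraic closedness appear at all — is exactly that this diagonalization is not automatic: $X_i$ need not contain any polydisk $\overline{\mathbb B}_\rho^d$ with $\rho$ close to $1$ (take covers obtained by deleting a shrinking ``slit'', or a shrinking ball that always stays interior), so one cannot merely take $i$ huge relative to $n$, and one must know the per-degree rate of convergence does not deteriorate as $n\to\infty$. That uniformity is precisely what the transfinite-diameter / factorization input supplies in the archimedean case and the Gauss-norm robustness supplies in the nonarchimedean case: the relevant extremal (Chebyshev / transfinite-diameter) quantities are insensitive to removing thin or small pieces and are continuous under increasing limits of sets, so $X_i\nearrow\mathbb B^d$ does force the optimal base down to $1$.
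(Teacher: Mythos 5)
Your archimedean half is in the same spirit as the paper's: both are logarithmic-potential arguments driven by the fact that (being complete, archimedean, and algebraically closed) $\mathbb{F}=\mathbb{C}$. The paper works concretely, interpolating at rotated $\lceil n/(1-\varepsilon)\rceil$-th roots of unity scaled by $r<1$ chosen inside $X_N$ and estimating the Lagrange denominators via $\int\log|1-e^{i\pi\theta}|\,d\theta$; you package the same content as a Bernstein--Walsh/transfinite-diameter comparison after factoring one-variable slices into linear forms. That is a legitimate variant, though you are leaning on more off-the-shelf capacity theory (inner regularity, monotone continuity of capacity) where the paper gives a self-contained computation, and the reduction from $d$ variables to the one-variable estimate by ``slicing'' should be spelled out as the paper's induction does.

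The nonarchimedean half has a genuine gap. You write that ``it is enough that $X_i$ meet a positive-measure portion of such polyspheres, which follows from $|X_i|\to|\mathbb B^d|$ together with Fubini,'' but there is no translation-invariant measure on $\mathbb{F}^d$ to make sense of $|X_i|$ or Fubini here: an algebraically closed complete nonarchimedean field (e.g.\ $\mathbb{C}_p$, or the completion of $\overline{\mathbb{F}_p((t))}$) is never locally compact, so there is no Haar measure. The paper's device for exactly this obstruction is to manufacture a compact set inside $\mathbb{B}$ carrying its own measure: $S_{p,k}=\{\sum a_n p^{n/k}:a_n\in\{0,1\}\}$, a Cantor set with Cantor measure, where algebraic closedness is what lets one extract $p^{1/k}$; increasing $k$ tightens the spread constants so the interpolation base can be pushed to $1$. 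Since $S_{p,k}$ is compact and the $X_i$ are a nested closed cover of $\mathbb{B}^d$, continuity of measure on the Cantor set replaces your appeal to $|X_i|\to|\mathbb{B}^d|$. Without some such surrogate measure, the Gauss-norm ``off a measure-zero set'' step in your argument is not available, so as written the nonarchimedean case does not close.
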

\begin{proof}
Suppose $\mathbb{F}$ is archimedian. 
The proof will go by induction on the number of variables. For the $d=0$ case, the claim is obvious.
Pick some $r<1$
Consider the set
$$S_{r}=\{|z|=r\}.$$
Choose $N$ such that $X_N \cap S_r$ has relative measure greater that $1-\varepsilon.$
Let $\rho_0 \ldots \rho_n \in X_n$ such that they are some rotated subset of the 
$\lceil \frac{n}{1-\varepsilon} \rceil$-roots of unity times $r$.
Let $t =n/\lceil \frac{n}{1-\varepsilon} \rceil.$
By Lagrange interpolation,
		$$h=\sum h_i \prod_{j\neq i} \frac{z-\rho_j}{\rho_i-\rho_j}.$$
where the $h_i$ are polynomials in one less variable.
Note,
$$\log \prod|\rho_i-\rho_j| \geq \frac{n}{2t}\int^{t}_{-t} \log|1-e^{i\pi \theta}|d\theta + n \log r.$$
Moreover,
$$\log \prod |e^{i\theta} -\rho_j| \leq \frac{n}{2t}\int^{t}_{-t} \log|1-e^{i\pi(1+\theta)}|d\theta.$$
As $\varepsilon \rightarrow 0, r\rightarrow 1,$ we see that these integrals go to $0,$ so we are done.

On the other hand, suppose $\mathbb{F}$ is nonarchimedian.
The proof will go by induction on the number of variables. For the $d=0$ case, the claim is obvious.
Let $p\in \mathbb{F}$ such that $|p|<1.$
Consider the set
	$$S_{p,k}=\{\sum^{\infty}_{n=1} a_np^{n/k}| a_n \in \{0,1\}\}.$$
Note $S_{p,k}$ is affine-spread with constants
$\gamma = \log_2 |p|^{-1/k}, C=1.$
Pick $N$ such that $X_N\cap S_{p,k}$ has measure greater than $1/2.$
Pick $\rho_0,\ldots, \rho_n \in X_N$ in distinct branches of the $\lfloor \log_2 n+2\rfloor$ level of the Cantor space.
By Lagrange interpolation,
		$$h=\sum h_i \prod_{j\neq i} \frac{z-\rho_j}{\rho_i-\rho_j}.$$
where the $h_i$ are polynomials in one less variable.
Now 
	$|\prod_{j\neq i} \frac{z-\rho_j}{\rho_i-\rho_j}|$ for $z \in \mathbb{B}$
is bounded by $|\prod_{i\neq j} \rho_i-\rho_j|^{-1}\leq  |p|^{-Gn/k}$
for some $G>0$ as in the proof of  spread interpolation theorem. Note as we increase $k$
we can make the growth rate arbitrarily close to $1,$ so we are done.
\end{proof}
We note that neither $\mathbb{R}$ nor $\mathbb{Z}_p$ have the perfect interpolation property.
For example, the family of polynomials $((z-3z^3)/2)^n$ are bounded by $1$ on the ball
in $\mathbb{R}$ and $\mathbb{Z}_2,$ but the coefficient of $z^{3n}$ grows without bound.

\begin{theorem}
	Suppose $\mathbb{F}$ has the perfect interpolation property.
	Suppose $f$ is a formal power series and
	$f(xz)$ has radius of convergence at least $1$ for $x\in \mathbb{B}^d.$
	Then, $f$ defines an analytic function on $\mathbb{B}^d$.
\end{theorem}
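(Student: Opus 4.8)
The plan is to run the same kind of argument as in the germination theorem --- decompose $f$ into homogeneous pieces and bound their coefficient norms by an interpolation inequality on a well-chosen exhaustion of $\mathbb{B}^d$ --- but now to extract from the \emph{perfect} interpolation property the extra fact that the interpolation constants $B_i$ can be taken arbitrarily close to $1$. That is precisely what upgrades ``positive radius of convergence'' to ``radius of convergence at least $1$''; the plain interpolation bound, with a fixed base $B$ possibly exceeding $1$, would only recover the germination theorem.

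Write $f=\sum_{n\ge 0} h_n$ with $h_n$ the homogeneous component of degree $n$, so that $\|h_n\|=\sum_{|\alpha|=n}|c_\alpha|$ is the total mass of the degree-$n$ coefficients of $f$. Saying $f(xz)=\sum_n h_n(x)z^n$ has radius of convergence at least $1$ for $x\in\mathbb{B}^d$ is the same as $\limsup_n |h_n(x)|^{1/n}\le 1$; in particular, for every $\rho<1$ the sequence $(|h_n(x)|\rho^n)_n$ is bounded. Fix $\rho\in(0,1)$, and for $C>0$ set
\[
X_C=\{\,x\in\mathbb{F}^d:\ |x_j|\le 1-\tfrac1C\ \text{for all }j,\ \text{and}\ |h_n(x)|\le C\rho^{-n}\ \text{for all }n\ge 0\,\}.
\]
Each $X_C$ is closed in $\mathbb{F}^d$, the family is nested in $C$, every $X_C$ lies in $\mathbb{B}^d$, and $\bigcup_C X_C=\mathbb{B}^d$ by the boundedness observation above; moreover $X_C$ contains the origin once $C$ is large, so along any sequence $C_i\nearrow\infty$ the sets $X_{C_i}$ form a nested closed exhaustion of $\mathbb{B}^d$ by nonempty sets.

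Next I would invoke the perfect interpolation property for the exhaustion $(X_{C_i})_i$: it furnishes $B_i\to 1$ and $A_i>0$ with $\|h\|\le A_iB_i^{\deg h}\|h\|_{X_{C_i}}$ for every polynomial $h$. Applying this to $h=h_n$ and using $\|h_n\|_{X_{C_i}}\le C_i\rho^{-n}$ gives
\[
\|h_n\|\ \le\ A_iC_i\,(B_i/\rho)^n\qquad\text{for all }n\text{ and }i.
\]
Taking $n$-th roots and letting $n\to\infty$ shows $\limsup_n\|h_n\|^{1/n}\le B_i/\rho$ for each $i$, hence $\limsup_n\|h_n\|^{1/n}\le 1/\rho$ since $B_i\to 1$. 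As $\rho\in(0,1)$ was arbitrary, $\limsup_n\|h_n\|^{1/n}\le 1$, so for every $s<1$ we get $\sum_\alpha|c_\alpha|s^{|\alpha|}=\sum_n\|h_n\|s^n<\infty$; thus $f$ converges absolutely on every polydisk of polyradius $s<1$ and defines an analytic function on $\mathbb{B}^d$.

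The crux is the interplay of two competing exponential losses: passing from $\|h_n\|$ to $\|h_n\|_{X_{C_i}}$ costs a factor $\rho^{-n}$, while interpolating off $X_{C_i}$ costs $B_i^{\,n}$, and neither factor alone has base at most $1$. The perfect interpolation property is used exactly to make the second base $B_i$ tend to $1$, so that the combined rate $B_i/\rho$ is driven down to $1/\rho$, after which a further limit $\rho\to 1^-$ removes the slack. The one genuinely technical point is arranging the $X_C$ to be closed in $\mathbb{F}^d$ rather than only relatively closed in $\mathbb{B}^d$, which is why the coordinatewise cutoff $|x_j|\le 1-1/C$ is built into their definition; everything else is bookkeeping.
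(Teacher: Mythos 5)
Your proof is correct, and it is worth noting that the paper states this theorem without giving any proof at all (the next section begins immediately after the theorem statement), so there is nothing to compare against; your argument is presumably the one the author considered routine enough to omit. The exhaustion $X_C$ with the coordinatewise cutoff $|x_j|\le 1-1/C$ is exactly the right device to get closed sets whose union is precisely $\mathbb{B}^d$ (without the cutoff one could overshoot $\mathbb{B}^d$, e.g.\ if $f\equiv 0$), the boundedness of $|h_n(x)|\rho^n$ follows correctly from $\limsup_n|h_n(x)|^{1/n}\le 1$, and the two-limit bookkeeping ($i\to\infty$ then $\rho\to 1^-$) cleanly extracts $\limsup_n\|h_n\|^{1/n}\le 1$ from the perfect interpolation constants $B_i\to 1$. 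Your closing remark correctly isolates the role of the \emph{perfect} interpolation property: a fixed base $B>1$ would only reproduce the qualitative germination theorem, whereas $B_i\to 1$ is what lets the slack vanish.
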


\section{Automatic analyticity}
We now discuss more abstract, flexible analogues of the royal road \cite{royal,regal} which in the complex case has been used to prove automatic analyticity results for multivariate matrix monotone and matrix convex functions, including a short proof of the commutative two variable L\"owner theorem due to Agler, McCarthy and Young \cite{amyloew}.

Call a sheaf of smooth functions $\mathcal{F}$ on $\mathbb{F}^d$ \dfn{pre-gentric}
if whenever $U$ is an open set, $u\in U,$ $f\in \mathcal{F}(U)$ and the formal power series for $f$ at $u$ is convergent on $U,$ then $f$ is equal to its formal power series expansion. 
Call a pre-gentric sheaf of functions $\mathcal{F}$ on $\mathbb{F}^d$ \dfn{gentric} if
there is a spread $X$ such that
for every open set $U$ and every $u\in U$, for any $f \in \mathcal{F}(U)$ the function $f(u+xz)$ is analytic in the single variable $z$ for every $x \in X$ near $z=0.$
We see the following as a result of the germination theorem.
\begin{corollary}
Every function in a gentric sheaf is analytic.
\end{corollary}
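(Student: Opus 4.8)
The plan is to reduce the corollary directly to the germination theorem, with pre-gentricity supplying only the last step that converts convergence of a formal series into an actual power-series identity. Fix an open set $U \subseteq \mathbb{F}^d$, a function $f \in \mathcal{F}(U)$, and a point $u \in U$; after a translation I may assume $u = 0$. Since $f$ is smooth at $u$, it has a formal power series expansion $g = \sum_{n \geq 0} h_n$ at $u$, with each $h_n$ homogeneous of degree $n$; a priori $g$ is only a formal object. The goal is to show that $g$ converges on a neighbourhood of $u$ and that $f$ agrees with $g$ there; since $u$ is arbitrary, this will give analyticity on all of $U$.

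First I would fix a spread set $X$ witnessing that $\mathcal{F}$ is gentric and, for each $x \in X$, invoke the hypothesis that $z \mapsto f(u + xz)$ is analytic in $z$ near $z = 0$. ``Analytic near $0$'' means this one-variable function coincides near $0$ with a convergent power series; since $f$ is smooth, that convergent series is forced to be the formal Taylor series of $z \mapsto f(u+xz)$ at $0$, which by the chain rule is exactly the slice $\sum_n h_n(x) z^n = g(zx)$. Hence $g(zx)$ has positive radius of convergence for every $x \in X$. The germination theorem then applies to the spread $X$ and the formal power series $g$ and yields that $g$ itself has positive radius of convergence; I then choose a ball $V$ about $u$, contained in $U$, on which $g$ converges.

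To finish, I would pass to the restriction $f|_V$, which lies in $\mathcal{F}(V)$ because $\mathcal{F}$ is a sheaf; its formal power series at $u$ is still $g$, and $g$ converges on all of $V$, so pre-gentricity applied with the open set $V$ forces $f|_V$ to equal its formal power series expansion on $V$. Thus $f$ is analytic at $u$, and letting $u$, then $U$ and $f$, vary gives the corollary.

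I expect the only delicate point to be the identification, for $x \in X$, of the convergent power series furnished by gentricity with the formal slice $g(zx)$ of the Taylor expansion of $f$ at $u$: this is the assertion that the formal Taylor expansion of a composition is the composition of the formal expansions, together with uniqueness of the formal Taylor series of a smooth function, and is routine given the chain rule for the relevant notion of smoothness. Everything else is formal --- the germination theorem does the analytic work, and pre-gentricity is precisely the hypothesis engineered to pass from ``the Taylor series of $f$ converges near $u$'' to ``$f$ equals its Taylor series near $u$.''
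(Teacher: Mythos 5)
Your argument is correct and is exactly the proof the paper intends; the paper simply states the corollary follows from the germination theorem without writing out the details. You correctly supply the two small points that make the reduction go through: first, that for $x\in X$ the convergent one-variable series guaranteed by gentricity must be the formal slice $g(zx)$ of the Taylor series by uniqueness of Taylor expansions of smooth functions; second, that pre-gentricity requires convergence on the whole open set, so one should restrict $f$ to a small ball $V$ inside the polyball of convergence of $g$ and apply pre-gentricity to $f|_V\in\mathcal F(V)$ via the sheaf restriction map. That restriction step is easy to overlook and your handling of it is the right one.
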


Call a sheaf of locally bounded functions $\mathcal{F}$ on $\mathbb{F}^d$ \dfn{noble} if
\begin{enumerate}
\item there is a spread $X$ such that
for every open set $U$ and every $u\in U$, for any $f \in \mathcal{F}(U)$ the function
$f(u+xz)$ is analytic in the single variable $z$ for every $x \in X,$
\item for each $U$ we have function $r(f)$  on $\mathcal{F}(U)$ valued in $[0,\infty]$ such that if $r(f_n-f)$ goes to $0$
then $f_n \rightarrow f$ uniformly and $r(f)\rightarrow r(f_n).$ Moreover, if $U$ is bounded and $f$ extends to some open neighborhood of its closure, then $r(f)$ is finite.
\item there is a function $c(x,u,t,U)$ which is monotone in $t$ and finite for finite $t$ such that writing $f(u+xz) = \sum a_n(x,u)z^n$
we have $|a_n(x,u)| \leq c(x,u,r(f),U)^{n+1}.$
\item there is a gentric subsheaf $\mathcal{G}$ such that if $V$ is a bounded open subset of $U$ whose closure is a subset of $U,$  $\mathcal{F}(U)|_V \subseteq \overline{\mathcal{G}(V)}$ where the topology is induced by $r.$
\end{enumerate}
As we have uniform estimates on the radius of convergence we can apply the quantitative germination theorem.
\begin{corollary}\label{nobleclass}
Every function in a noble sheaf is analytic.
\end{corollary}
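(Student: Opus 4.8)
The plan is to show that each $f \in \mathcal{F}(U)$ agrees locally with its formal power series expansion, after which membership in the gentric subsheaf $\mathcal G$ forces analyticity. The engine is the quantitative germination theorem, and the four hypotheses in the definition of noble are precisely the bookkeeping needed to feed it. Fix an open set $U$, a point $u \in U$, and $f \in \mathcal{F}(U)$. By property (4), after shrinking to a bounded open $V$ with $u \in V$ and $\overline V \subseteq U$, we may approximate $f|_V$ in the $r$-topology by a sequence $g_k \in \mathcal{G}(V)$; since $\mathcal G$ is gentric, each $g_k$ is analytic on $V$. The key point will be to transfer a uniform radius-of-convergence estimate from the $g_k$ to $f$.

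First I would set up the slice estimates. By property (1) there is a fixed spread $X$ such that $f(u + xz) = \sum_n a_n(x,u) z^n$ converges for each $x \in X$ near $0$; by property (3) the coefficients obey $|a_n(x,u)| \le c(x,u,r(f),U)^{n+1}$, so the radius of convergence of the slice in direction $x$ is bounded below by $c(x,u,r(f),U)^{-1}$, uniformly once we bound $r(f)$. Since $\overline V \subseteq U$ and $U$ is open, property (2) gives $r(f|_V) < \infty$, and more importantly, if we translate and rescale so that we work on a small ball $\mathbb{B}^d$ around $u$, the constant $c$ is monotone in $t$ and finite for finite $t$, so along $X$ the slices of $f$ have radius of convergence bounded below by some fixed $\rho > 0$ depending only on $x$, $u$, $V$, $U$. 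Rescaling the direction vectors (absorbing constants into $x$, using that $X$ spread implies a rescaled copy is still spread), we may assume each slice $f(u+xz)$ has radius of convergence at least $1$ for $x \in X$.

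Next I would invoke the quantitative germination theorem directly: since $X$ is spread and, for each $x \in X$, the formal power series $z \mapsto f(u + xz)$ — equivalently the formal power series $f$ centered at $u$, restricted to the slice — has radius of convergence at least $1$, the theorem produces a universal constant $C > 0$ (depending only on $X$) such that the full formal power series of $f$ at $u$ has radius of convergence at least $C$. In particular there is a polydisk $B(u, C) $ on which the formal power series of $f$ at $u$ converges. Shrinking $V$ so that $V \subseteq B(u,C)$, the formal power series for $f$ at $u$ is convergent on $V$. Now property (4) and gentricity close the argument: $f|_V$ lies in the $r$-closure of $\mathcal G(V)$, and by the pre-gentric property applied to the ambient sheaf structure — more precisely, since the formal power series of $f$ at $u$ converges on $V$ and $\mathcal F$ is pre-gentric (every noble sheaf is pre-gentric via its gentric subsheaf, or one argues directly that $r$-limits of the analytic $g_k$, which converge uniformly by property (2), must equal the convergent power series) — $f$ equals its power series on a neighborhood of $u$. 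Since $u$ was arbitrary, $f$ is analytic on $U$.

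The main obstacle, and the step I would be most careful about, is the passage from "uniform lower bound on slice radii of convergence along $X$" to an honest application of the quantitative germination theorem with the \emph{same} power series: one must check that the coefficient bound in property (3), which is stated pointwise in $x$ with constant $c(x,u,r(f),U)$, can be made uniform over the spread set $X$ (or at least over a spread subset), so that after rescaling the directions the hypothesis "radius of convergence exactly/at least $1$ for all $x \in X$" genuinely holds. This is where properties (2) and (3) interlock: $r(f|_V)$ is finite because $\overline V \subseteq U$, $c$ is finite for finite $t$, and one restricts to the subset of $X$ where $c(x,u,r(f),U)$ is bounded — which, being the union over $K$ of relatively closed sets covering $X$, contains a spread subset exactly as in the proof of the germination theorem itself. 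Everything else is routine sheaf-theoretic localization and the already-granted gentric case.
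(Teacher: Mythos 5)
Your proposal correctly singles out the quantitative germination theorem as the engine, which is the paper's approach, and your final paragraph's handling of the $x$-dependence of $c(x,u,t,U)$ — restricting to a spread subset where $c$ is bounded, then rescaling — is the right move. But there is a genuine gap in your primary line of argument.

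You apply the quantitative germination theorem directly to ``the formal power series of $f$ at $u$.'' Noble sheaves consist only of \emph{locally bounded} functions, not smooth ones. Property (3) gives you, for each fixed direction $x\in X$, a single-variable expansion $f(u+xz)=\sum a_n(x,u)z^n$, but the coefficients $a_n(x,u)$ need not be homogeneous polynomials of degree $n$ in $x$ (that would require smoothness of $f$), so there is no multivariable formal power series of $f$ at $u$ to which the germination theorem can be applied. This is exactly why property (4) is in the definition and cannot be relegated to a footnote: the approximants $g_k\in\mathcal{G}(V)$ live in a gentric, hence pre-gentric, hence \emph{smooth} sheaf, so they do possess formal Taylor expansions. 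The correct route — which you mention only parenthetically as an alternative — is to apply the quantitative germination theorem to the $g_k$ (with uniform control because $r(g_k)\to r(f)<\infty$ by property (2), $c$ is monotone in $t$, and one passes to a spread subset of $X$ as you describe), obtain a uniform lower bound on the polydisk of convergence of the $g_k$ around $u$, and then use property (2)'s uniform convergence to conclude $f$ is a uniform limit of functions analytic on a common neighborhood, hence analytic. Relatedly, your claim that ``every noble sheaf is pre-gentric via its gentric subsheaf'' does not hold up: the pre-gentric condition is a statement about smooth functions and their Taylor series and does not even apply to a general noble $f$. Your alternative argument is the real proof; promote it and drop the direct application to $f$.
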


\subsection{The edge-of-the-wedge theorem}
We see the following ``by definition" version of the edge-of-the-wedge theorem.
\begin{theorem}\label{eow2}
Let $\mathbb{K}$ be an extension of $\mathbb{F}.$
Let $\Gamma \subseteq \mathbb{K}^d$ such that 
\begin{enumerate}
\item $\mathbb{F}^d\subseteq\overline{\Gamma}$
\item for any slice of points of the form $x+zv$ where $x,v \in \mathbb{F}^d$ and $z \in \mathbb{K}$
either intersects $\Gamma$ for all $z \in \mathbb{K}\setminus \mathbb{F}$ or none,
\item there is a spread $X$ of $v$ which admit such intersecting slices.
\end{enumerate}
Let $\mathcal{F}$ be a sheaf such that $\mathcal{F}(U)$ is a set of functions $f$ on $U$ such that
\begin{enumerate}
	\item there is a continuous $g$ on $\Gamma \cup U$ such that $g|_U=f,$
	\item  the function $f(u+tv)$ is analytic, and given $\hat{U}$ a neighborhood of $U$ there is 
	$c(u,v,t,U)$ such that   $|a_n(v,u)| \leq c(x,u,\|g\|_{\hat{U}},U)^{n+1}.$
	\item there is a gentric subsheaf $\mathcal{G}$ such that if $V$ is a bounded open subset of $U$ whose closure is a subset of $U,$  $\mathcal{F}(U)|_V \subseteq \overline{\mathcal{G}(V)}$ where the topology is induced by $r.$
\end{enumerate}
Then, $\mathcal{F}$ is noble.
\end{theorem}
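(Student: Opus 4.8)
The plan is to verify the four defining conditions of a noble sheaf one at a time, using the hypothesis (3) on $\mathcal{F}$ verbatim for the fourth condition and constructing the appropriate spread set and radius functional for the first three. First I would set up the spread set: condition (3) on $\Gamma$ gives a spread $X$ of directions $v \in \mathbb{F}^d$ whose slices $x + zv$ meet $\Gamma$ for every $z \in \mathbb{K}\setminus\mathbb{F}$. For such a $v$, and any basepoint $u \in U \subseteq \mathbb{F}^d$, the slice $u + zv$ lies in $\Gamma$ for all $z$ with $|z|$ small and $z \notin \mathbb{F}$, and the continuous extension $g$ from condition (1) restricts to a function on this slice. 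By hypothesis (2) this slice function $z \mapsto g(u+zv)$ is analytic near $z=0$; thus condition (1) of nobility holds with this $X$.

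Next I would define the functional $r$. The natural choice is $r(f) = \|g\|_{\hat U}$ for a fixed neighborhood $\hat U$ of the relevant set, or more precisely a sup-norm-type quantity associated to the continuous extension $g$ on $\Gamma \cup U$; condition (2) of nobility then requires that $r(f_n - f) \to 0$ forces uniform convergence and $r(f_n) \to r(f)$, and that $r$ is finite when $f$ extends past the closure of a bounded $U$ — both of which are immediate for a sup norm of the continuous extension. Condition (3) of nobility — the coefficient bound $|a_n(v,u)| \le c(v,u,r(f),U)^{n+1}$ — is \emph{exactly} hypothesis (2) on $\mathcal{F}$, once we identify $r(f)$ with $\|g\|_{\hat U}$, with the same function $c$ (monotone in its $t$-slot and finite for finite $t$, as required). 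Finally, condition (4) of nobility is \emph{verbatim} hypothesis (3) on $\mathcal{F}$: the existence of a gentric subsheaf $\mathcal{G}$ with $\mathcal{F}(U)|_V \subseteq \overline{\mathcal{G}(V)}$ in the $r$-topology. So the bulk of the proof is bookkeeping: matching the abstract noble axioms to the concrete edge-of-the-wedge hypotheses.

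The one genuine point that needs care — and which I expect to be the main obstacle — is condition (1) of nobility demanding that $f(u+xz)$ be analytic \emph{for every $x \in X$}, whereas the edge-of-the-wedge setup only naturally gives analyticity of $g$ along slices that actually enter the region $\Gamma$. Here one must argue that analyticity of $z \mapsto g(u+zv)$ on a punctured disk minus the residual set $\mathbb{F}$ (where the slice sits in $U$ rather than $\Gamma$), together with continuity across that residual set, forces analyticity through it — this is precisely the one-variable removable-singularity / edge-of-the-wedge phenomenon, and it is where the field-theoretic content (as opposed to pure bookkeeping) lives. In the archimedean case this is Morera or a classical one-dimensional edge-of-the-wedge argument; in the non-archimedean case one would instead invoke that the complement of $\mathbb{F}$ in a disk of $\mathbb{K}$ is dense and that a locally bounded function analytic on a dense open subset and continuous everywhere is analytic. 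Once analyticity along every $x \in X$ slice is secured, the remaining verifications are routine, and the conclusion that $\mathcal{F}$ is noble follows; Corollary \ref{nobleclass} then yields analyticity of every $f \in \mathcal{F}(U)$ as the payoff.
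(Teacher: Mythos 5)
Your proof is essentially the same as the paper's intent: the paper presents Theorem~\ref{eow2} explicitly as a ``by definition'' result and gives no proof at all, because the hypotheses on $\mathcal{F}$ were engineered to translate verbatim into the four noble axioms. Your first two paragraphs do exactly the right bookkeeping --- take $X$ from hypothesis~(3) on $\Gamma$, take $r(f)=\|g\|_{\hat U}$, and match the remaining clauses term-by-term --- so the core of the argument is correct.

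However, your final paragraph invents a gap that is not present in the theorem as stated. You worry that analyticity of $z\mapsto g(u+zv)$ through the residual set $\mathbb{F}$ must be derived by a Morera/removable-singularity argument, but hypothesis~(2) on $\mathcal{F}$ already \emph{assumes} that $f(u+tv)$ is analytic; there is nothing left to derive at the level of the abstract theorem. The removable-singularity content you describe is real, but it belongs to the task of verifying that a concrete sheaf (such as continuous functions on $\Pi^d\cup D\cup(-\Pi)^d$ analytic on the wedges) actually satisfies the hypotheses of Theorem~\ref{eow2} --- which is the separate remark the paper makes immediately after the theorem statement, alongside the observation that condition~(3) corresponds to a Cauchy estimate and condition~(4) to closure under convolution with a smooth approximate identity. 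In short: your first two paragraphs are the proof; your last paragraph describes the \emph{application} of the theorem, not a missing step in it.
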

Note that the sheaf of continuous functions analytic on $U \subseteq \mathbb{R}^d$ with a extension to $\Pi^d\cup(-\Pi)^d$ which is analytic there satisfy such assumptions.
(The second to last condition corresponds to a Cauchy type estimate. The final condition relies on the fact that the space of functions is convex and locally bounded and thus closed under convolution with a smooth approximate identity.)
\bibliography{references}
\bibliographystyle{plain}

\end{document}